\numberwithin{equation}{section}
\newtheorem{theorem}{Theorem}[section]
\newtheorem{lemma}[theorem]{Lemma}
\newtheorem{cor}[theorem]{Corollary}
\theoremstyle{definition}
\newtheorem{definition}[theorem]{Definition}
\newtheorem{remark}[theorem]{Remark}
\newcommand{\be}{\begin{eqnarray*}}
\newcommand{\ee}{\end{eqnarray*}}
\newcommand{\beq}{\begin{equation}}
\newcommand{\eeq}{\end{equation}}
\begin{document}

\title[A generalized dyadic maximal operator involving the infinite product]
  {Weighted inequalities
for a generalized dyadic maximal operator involving the infinite product}

\authors

\author[W. Chen]{Wei Chen}
\address{Wei Chen \\ School of Mathematical Sciences,
Yangzhou University, 225002 Yangzhou, China}
\email{weichen@yzu.edu.cn}

\author[R. J. Chen]{Ruijuan Chen}
\address{Ruijuan Chen \\ School of Mathematical Sciences,
Yangzhou University, 225002 Yangzhou, China}
\email{rjchenyzu@gmail.com}

\author[C. Zhang]{ Chao Zhang}
\address{Chao Zhang \\ School of Statistics and Mathematics, Zhejiang Gongshang University, 310018 Hangzhou , China}
\email{zaoyangzhangchao@163.com}

\makeatletter
\renewcommand{\@makefntext}[1]{#1}
\makeatother \footnotetext{\noindent
 Supported by the National Natural
Science Foundation of China (Grant No. 11101353 and No. 11271292),
the Natural Science Foundation of Jiangsu Education Committee (Grant No. 11KJB110018), the Natural Science Foundation of Jiangsu Province (Grant No. BK2012682) and the Natural Science Foundation of Zhejiang Province (Grant No. LQ14A010001).}

\keywords{weighted inequality, infinite product, dyadic cube.}
\subjclass[2010]{Primary 42B25; Secondary 42B35}

%
%
\begin{abstract} {We define a generalized dyadic maximal operator involving the infinite product
and discuss weighted inequalities for the operator. A formulation of the Carleson embedding theorem is proved.
Our results depend heavily on a generalized H\"{o}lder's inequalities.}
\end{abstract}

\maketitle

%
%
 \section{Introduction }
\subsection{Weighted Inequalities for the Hardy-Littlewood Maximal Operator and the Multisubliear One in $R_n$}
Let $R_n$ be the $n\hbox{-dimensional}$ real Euclidean
space and $f$ a real valued measurable function. The classical
Hardy-Littlewood maximal operator $M$ is defined by
\be Mf(x)=\sup\limits_{x\in Q}\frac{1}{|Q|}\int_Q|f(y)|dy,\ee
where $Q$ is a non-degenerate cube with its sides parallel to the coordinate
axes and $|Q|$ is the Lebesgue measure of $Q.$

Let $u,~v$ be two weights, i.e., positive locally integrable functions. As is
well known, for $p\geq1,$ Muckenhoupt \cite{Muckenhoupt B} showed that the
inequality
$$ \lambda^p\int_{\{Mf>\lambda\}}u(x)dx
\leq C \int_{R_n}|f(x)|^pv(x)dx, ~~\lambda>0,~f\in{L^p(v)}
$$
holds if and only if $(u,v)\in A_p,$ i.e., for any cube $Q$ in
$R_n$ with sides parallel to the coordinates
$$
\big(\frac{1}{|Q|}\int_Qu(x)dx\big)
\big(\frac{1}{|Q|}\int_Qv^{-\frac{1}{p-1}}(x)dx\big)^{p-1}<C,~p>1;
$$
$$
\frac{1}{|Q|}\int_Qu(x)dx\leq C \mathop{\hbox{ess inf}}\limits_{Q}v(x)
,~p=1.
$$
Suppose that $u=v$ and $p>1,$ Muckenhoupt \cite{Muckenhoupt B} also proved that
$$ \int_{R_n}\big(Mf(x)\big)^pv(x)dx
\leq C \int_{R_n}|f(x)|^pv(x)dx, ~\forall f\in{L^p(v)}
$$
holds if and only if $v$ satisfies
\begin{equation}\label{Ap}
\big(\frac{1}{|Q|}\int_Qv(x)dx\big)
\big(\frac{1}{|Q|}\int_Qv^{-\frac{1}{p-1}}(x)dx\big)^{p-1}<C,~\forall Q.
\end{equation}
But, the problem of finding all $u$ and $v$ such that
$$ \int_{R_n}\big(Mf(x)\big)^pu(x)dx
\leq C \int_{R_n}|f(x)|^pv(x)dx,~\forall f\in{L^p(v)}
$$
is much hard and complicated. In order to solve the problem,
Sawyer \cite{Sawyer E T.} established the
testing condition $S_{p,q},$ i.e., for any cube $Q$ in
$R_n$ with sides parallel to the coordinates
$$\Big(\int_{Q}\big(M(\chi_Qv^{1-p'})(x)\big)^qu(x)dx\Big)^{\frac{1}{q}}
\leq C(\int_Qv^{1-p'}(x)dx)^{\frac{1}{p}},$$
where $1<p\leq q<\infty.$
The condition $S_{p,q}$ is a
sufficient and necessary condition such that
the weighted inequality
$$\Big(\int_{R_n}\big(Mf(x)\big)^qu(x)dx\Big)^{\frac{1}{q}} \leq C
\Big(\int_{R_n}|f(x)|^pv(x)dx\Big)^{\frac{1}{p}}, ~\forall f\in{L^p(v)}$$
holds.
Motivated by these
results, the theory of weighted inequalities developed rapidly in the last years, not
only for the Hardy-Littlewood maximal operator but also for some of the main
operators in Harmonic Analysis like Calderson-Zygmund operators (see \cite{Cruz-Uribe D. J. M. Martell} and \cite{Garcia Rubio}
for more informations).

Recently, the multisublinear maximal function
\begin{equation}\label{multi_maximal_operator}\mathcal{M}(f_1,...,f_m)(x) = \sup\limits_{x\in Q}
\prod\limits_{i=1}\limits^{m}\frac{1}{|Q|}\int_Q|f_i(y_i)|dy_i
\end{equation}
associated with cubes with sides parallel to the coordinate
axes was studied in \cite{Lerner A.K. Ombrosi S.}.
The importance of this operator is that it generalizes the Hardy--Littlewood
maximal function (case $m=1$) and in several ways it controls the class
of multilinear Calderon--Zygmund operators as it is shown in \cite{Lerner A.K. Ombrosi S.}.
The relevant class of multiple weights for $\mathcal{M}$ is given by the condition $A_{\overrightarrow{p}}:$ for
$\overrightarrow{p}=(p_1,p_2,\cdot\cdot\cdot,~ p_m),$
$\overrightarrow{\omega}=(\omega_1, ~\omega_2,\cdot\cdot\cdot,~\omega_m)$ and a weight $v,$
the weight vector $(v, \overrightarrow{\omega})\in A_{\overrightarrow{p}}$ if
$$\sup_Q\frac{v(Q)}{|Q|}\prod\limits^m_{i=1}\big(\frac{1}{|Q|}\int_Q\omega_i^{-\frac{1}{p_i-1}}(y_i)dy_i\big)^{\frac{p}{p'_i}}
< \infty,$$
where $\frac{1}{p}=\sum\limits^m_{i=1}\frac{1}{p_i }$ and $1\leq p_1,p_2,...,p_m<\infty.$

It is easy to see that in the linear case (that is,
if $m=1$), condition $A_{\overrightarrow{p}}$ is the usual $A_p.$
In \cite{Lerner A.K. Ombrosi S.} the following multilinear extension
of the Muckenhoupt $A_p$ theorem for the maximal function was obtained:  the inequality
$$\|\mathcal{M}(\overrightarrow{f})\|_{L^{p,\infty}(v)}\leq
C\prod\limits^m_{i=1}\|f_i\|_{L^{p_i}(\omega_i)},
~\forall f_i\in L^{p_i}(\omega_i)$$
holds if and only if $(v, \overrightarrow{\omega})\in A_{\overrightarrow{p}}.$
Moreover, if $1< p_1,p_2,...,p_m<\infty$ and
$v=\prod_{i=1}^mw_i^{p/p_i},$ then the inequality
$$\|\mathcal{M}(\overrightarrow{f})\|_{L^{p}(v)}\leq
C\prod\limits^m_{i=1}\|f_i\|_{L^{p_i}(\omega_i)},
~\forall f_i\in L^{p_i}(\omega_i)$$
holds if and only if $(v, \overrightarrow{\omega})\in A_{\overrightarrow{p}}.$ The more general case was extensively discussed in
\cite{Grafakos Perez, Grafakos Liu}.

In order to establish the generalization of Sawyer's theorem to the multilinear setting,
a kind of monotone property and a
reverse H\"{o}lder's inequality on the weights were introduced in \cite{W. M. Li} and \cite{Chen-Damian}, respectively.
They both obtained the multilinear version of Sawyer's result.

In this paper, for suitable $\overrightarrow{f}=(f_1,f_2,...)$(see Remarks \ref{lem def} and \ref{proper 2} for two kinds of suitable conditions), we define a new generalized dyadic maximal function
\begin{equation}\label{g-multi_maximal_operator}\mathfrak{M}_d(\overrightarrow{f})(x)\triangleq\sup\limits_{x\in B\in\mathcal{D}}
\prod\limits_{i=1}\limits^{\infty}\frac{1}{|B|}\int_B|f_i(y_i)|dy_i,
\end{equation}
where $\mathcal{D}$ is the family of dyadic cubes in $R_n.$
This operator involves the concept of an infinite product which will be recalled in Section \ref{section B0}. Our main result are weighted inequalities for the operator. In addition, we can define the following operator
$$\mathfrak{M}(\overrightarrow{f})(x)\triangleq\sup\limits_{x\in Q}
\prod\limits_{i=1}\limits^{\infty}\frac{1}{|Q|}\int_Q|f_i(y_i)|dy_i
$$
associated with cubes with sides parallel to the coordinate
axes. Then it is natural to establish weighted inequalities for it.
But, the method of \cite[Lemma 2.2]{Lerner A.K. Ombrosi S.} is not suitable.
One reason is that Calderon--Zygmund decomposition
deeply depends on the constant $m,$ which appears in \eqref{multi_maximal_operator}.
However, this is not the end of the story.
The related theory in martingale setting was established in \cite{Chen-Jiao}.

Our paper is organized as follows. Section \ref{pre} contains
some basic definitions and facts concerning series, Lebesgue's integral,
the infinity product and dyadic cubes needed throughout the rest of this paper.
This section also contains a generalized H\"{o}lder's inequalities borrowed from \cite{Chen-Jiao}.
Our main results are stated and proved in Section \ref{m re}.

\section{Preliminaries}\label{pre}
\subsection{Some Properties of Series, Lebesgue's Integral and the Infinity Product}\label{section B0}
Let $\{a_i\}$ be a sequence of real numbers.
Let $\{s_n\}$ be the sequence obtained from $\{a_i\},$ where for each $n\in N,~s_n=\sum\limits^{n}_{i=1}a_i.$
If $s_n$ converges in $R$ or diverges to $+\infty$ (or $-\infty$),
we say that the sum of the series is well defined and we denote the sum as $\sum\limits^{\infty}_{i=1}a_i.$
Let $\lambda_i\in(0,1),b_i\in R, ~i\in N,$ and let $\sum\limits_{i=1}^{\infty}\lambda_i=1.$ It is known that $(N, 2^N)$ is a measurable space.
By the sequences $\{\lambda_i\}$ and $\{b_i\},$
we can define a measure $\lambda$ and a measurable function $b$ on the space in the following way
$$\lambda(i)=\lambda_i\hbox{ and }b(i)=b_i,~\forall i\in N.$$ Then $(N, 2^N, \lambda)$ is a probability space. Applying Levi's Lemma, we have
$$\sum\limits^{\infty}_{i=1}\lambda_ib^+_i=\lim\limits_{k\rightarrow\infty}\sum\limits^k_{i=1}\lambda_ib^+_i
=\lim\limits_{k\rightarrow\infty}\int_N b^+\chi_{\{1,2,...,k\}} d\lambda=\int_N b^+d\lambda$$ and $$\sum\limits^{\infty}_{i=1}\lambda_ib^-_i=\lim\limits_{k\rightarrow\infty}\sum\limits^k_{i=1}\lambda_ib^-_i
=\lim\limits_{k\rightarrow\infty}\int_N b^-\chi_{\{1,2,...,k\}} d\lambda=\int_N b^-d\lambda.$$
For simplicity, we denote $\sum\limits^{\infty}_{i=1}\lambda_ib^+_i$ and $\sum\limits^{\infty}_{i=1}\lambda_ib^-_i$ by $A$ and $B,$ respectively.  It follows that $A,B\in [0,+\infty].$ If $A$ or $B$ is finite,
then $\sum\limits^{\infty}_{i=1}\lambda_ib_i$ is well defined, integral of $b$ exists and
$$\sum\limits^{\infty}_{i=1}\lambda_ib_i=\int_N bd\lambda.$$

Let us recall the concept of an infinite product(see, e.g., \cite[p.~298]{Rudin}).

\begin{definition} Suppose $\{c_n\}$ is a sequence of complex number,
$$p_n=\prod\limits^{n}_{i=1}c_i,$$
and $p=\lim\limits_{n\rightarrow\infty}p_n$ exists. Then we write
\begin{equation}\label{Rudin}p=\prod\limits^{\infty}_{i=1}c_i.\end{equation}
The $p_n$ are the partial products of the infinite product (\ref{Rudin}).
We should say that the infinite product (\ref{Rudin}) converges if the sequence $\{p_n\}$ converges.
\end{definition}

\begin{remark} \label{re-aa}Suppose $\{c_n\}$ and $\{c'_n\}$ are nonnegative sequences,
and the infinite product $\prod\limits^{\infty}_{i=1}c_i$ converges. If $c'_n\leq c_n,~n\in N,$ then the infinite product $\prod\limits^{\infty}_{i=1}c'_i$ also converges.
\end{remark}

\begin{remark} \label{re-bb}Suppose $\{f_i\}$ is a sequence of measurable functions on a measurable space $(\Omega,\mathcal{F}),$ and suppose that
the sequence of numbers $\{\prod\limits^{n}_{i=1}f_i(x)\}$ converges for every $x\in \Omega.$ We can then define a function $\prod\limits^{\infty}_{i=1}f_i$ by
$$\prod\limits^{\infty}_{i=1}f_i(x)=\lim\limits_{n\rightarrow\infty}\prod\limits^{n}_{n=1}f_i(x).$$
We should say that the function $\prod\limits^{\infty}_{i=1}f_i(x)$ is well defined.
\end{remark}

\begin{remark}\label{C-P-series-cor-E}
Let $1<p_i<\infty, i\in N$ and $\sum\limits_{i=1}^{\infty}\frac{1}{p_i}=\frac{1}{p}.$
Then
$$\prod\limits^{\infty}_{i=1}p'_i<\infty,$$
where $\frac{1}{p_i}+\frac{1}{p'_i}=1,~i\in N.$
\end{remark}

The above Remark \ref{C-P-series-cor-E} can be checked easily(see, e.g., \cite[Theorem 2.12]{Chen-Jiao}). In addition, we mention that the assumptions in  following Remark \ref{remark con} can be found in \cite[P.68]{L. Grafakos}.

\begin{remark}\label{remark con}Let $1<p_i<\infty, i\in N$ and $\frac{1}{p}=\sum\limits^{\infty}_{i=1}\frac{1}{p_i }.$ If $\sum\limits^{\infty}_{i=1}\frac{\ln p_i}{p_i}<\infty,$ then $\prod\limits^{\infty}_{i=1}p^{\frac{p'}{p}} p'<\infty.$
\end{remark}

\subsection{Generalized H\"{o}lder's Inequality for Integral}\label{section B 1}
In the subsection,
we suppose that $(\Omega,\mathcal{F},\mu)$ is a measure space and $\{f_i\}$ is a sequence of nonnegative measurable functions on $(\Omega,\mathcal{F},\mu)$. We recall the following lemma which is a
generalized H\"{o}lder's inequalities(see, e.g., \cite[Theorem 2.11]{Chen-Jiao}). This kind of inequality also be discussed on the $\sigma-$finite measure space in \cite{Karakostas}.

\begin{lemma}\label{C-P-prop-A}
Let $0<p_i<\infty, i\in N$ and $\sum\limits_{i=1}^{\infty}\frac{1}{p_i}=\frac{1}{p}.$
If $\prod\limits^{\infty}_{i=1}\|f_i\|_{L^{p_i}}<\infty,$ then the function $\prod\limits^{\infty}_{i=1}f_i$ is well defined and $\|\prod\limits^{\infty}_{i=1}f_i\|_{L^p}\leq \prod\limits^{\infty}_{i=1}\|f_i\|_{L^{p_i}}.$
\end{lemma}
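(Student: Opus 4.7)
My plan is to reduce to the case $p = 1$ by raising both sides to the $p$-th power, then exploit the probability measure on $\mathbb{N}$ constructed in Section~\ref{section B0} together with Jensen's inequality to obtain a generalized Young inequality. Set $F_i = f_i^p$ and $s_i = p_i/p$; then $\sum_i 1/s_i = 1$ and $\|F_i\|_{L^{s_i}} = \|f_i\|_{L^{p_i}}^p$, so the conclusion reduces to $\int \prod_i F_i\,d\mu \le \prod_i \|F_i\|_{L^{s_i}}$ under $\sum_i 1/s_i = 1$. After the standard normalization $\tilde F_i := F_i / \|F_i\|_{L^{s_i}}$ (with the degenerate cases $\|F_i\|_{L^{s_i}} \in \{0, \infty\}$ handled separately), the task becomes to show $\int \prod_i \tilde F_i\,d\mu \le 1$.

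Equip $\mathbb{N}$ with the probability measure $\lambda$ defined by $\lambda(\{i\}) = 1/s_i$. For a.e.\ fixed $x \in \Omega$, apply Jensen's inequality (concavity of $\log$) to the random variable $i \mapsto \tilde F_i(x)^{s_i}$:
\begin{equation*}
\sum_{i=1}^{\infty} \frac{1}{s_i}\log \tilde F_i(x)^{s_i}
= \int_{\mathbb{N}} \log \tilde F_i(x)^{s_i}\,d\lambda(i)
\le \log \int_{\mathbb{N}} \tilde F_i(x)^{s_i}\,d\lambda(i)
= \log \sum_{i=1}^{\infty} \frac{1}{s_i}\tilde F_i(x)^{s_i}.
\end{equation*}
Exponentiating yields the pointwise infinite Young inequality $\prod_i \tilde F_i(x) \le \sum_i (1/s_i)\tilde F_i(x)^{s_i}$. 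Integrating over $\Omega$ and applying Tonelli's theorem on $\Omega \times \mathbb{N}$ gives $\int \prod_i \tilde F_i\,d\mu \le \sum_i (1/s_i)\|\tilde F_i\|_{L^{s_i}}^{s_i} = \sum_i 1/s_i = 1$, which, after unwinding the normalizations, is the stated norm inequality.

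The main obstacle is justifying that $\prod_{i=1}^{\infty} f_i$ is well defined in the sense of Remark~\ref{re-bb}, i.e.\ that the partial products converge pointwise a.e., so that the ``$\prod_i$'' above has unambiguous meaning. To handle this I would first establish, by iterating the two-factor H\"{o}lder inequality, the finite-$n$ bound $\|\prod_{i=1}^n f_i\|_{L^{q_n}} \le \prod_{i=1}^n \|f_i\|_{L^{p_i}}$ with $1/q_n = \sum_{i=1}^n 1/p_i$; note that $q_n \downarrow p$ and the partial norm products remain bounded. Applying Jensen to a finite probability distribution obtained by adjoining a constant weight $\beta_n = 1 - \sum_{i \le n} 1/s_i \to 0$ gives a finite-$n$ version of the pointwise inequality that bounds $\prod_{i=1}^n \tilde F_i(x)$ uniformly in $n$ by $1 + \sum_i (1/s_i)\tilde F_i^{s_i}(x)$, which is finite a.e.\ by Tonelli. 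Combining this uniform pointwise bound with the tail behavior $\prod_{i=m+1}^{\infty} \|f_i\|_{L^{p_i}} \to 1$ as $m \to \infty$, the partial sums $\sum_{i=1}^n \log f_i(x)$ can be shown to converge a.e.\ in $[-\infty, \infty)$, which is exactly the convergence of the partial products. A final application of Fatou's lemma then formally promotes the integrated estimate into the inequality $\|\prod_i f_i\|_{L^p} \le \prod_i \|f_i\|_{L^{p_i}}$.
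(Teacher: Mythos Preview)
The paper does not prove Lemma~\ref{C-P-prop-A}; it merely records the statement and cites \cite[Theorem~2.11]{Chen-Jiao} (with a parallel reference to \cite{Karakostas}). There is therefore no in-paper argument to compare your proposal against.

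Your route is nonetheless the natural one and is correct in outline: reduce to $p=1$, normalize so that each factor has unit norm, and obtain the infinite-index Young inequality $\prod_i a_i^{\lambda_i}\le\sum_i\lambda_i a_i$ from Jensen on the probability space $(\mathbb N,2^{\mathbb N},\lambda)$ set up in Section~\ref{section B0}. The only place your write-up is genuinely thin is the well-definedness step. Invoking ``the tail behavior $\prod_{i>m}\|f_i\|_{L^{p_i}}\to 1$'' is a red herring (and is not even true when the norm product converges to~$0$); mere boundedness of the partial products does not force their convergence. What actually closes the gap is to split $\log\tilde F_i(x)=\log^+\tilde F_i(x)-\log^-\tilde F_i(x)$: your finite Young bound applied to $\max(\tilde F_i,1)$ in place of $\tilde F_i$ yields $\sum_i\log^+\tilde F_i(x)<\infty$ for a.e.\ $x$ (since $\sum_i s_i^{-1}\tilde F_i^{s_i}\in L^1$ by Tonelli), while $\sum_i\log^-\tilde F_i(x)$ converges in $[0,\infty]$ by monotonicity. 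Hence $\sum_i\log\tilde F_i(x)$ converges in $[-\infty,\infty)$ a.e., which is exactly the claim that $\prod_i f_i$ is well defined in the sense of Remark~\ref{re-bb}; multiplying back by the convergent scalar product $\prod_i\|F_i\|_{L^{s_i}}$ passes this to the unnormalized $F_i$. After that, Fatou (or direct integration of the pointwise Young bound) gives the norm inequality as you describe.
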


\subsection{Dyadic Cubes and the Dyadic Maximal Function in $R_n$}\label{dya cube}
 In $R_n$, let $[0,1)^n$ be the unit cube open on the right, and let $\mathcal{D}_0$ be the collection of such cubes
with vertices lying on the lattice $Z^n.$ Dilating the family by a factor of $2^k,~ k\in Z,$ we get a family $\mathcal{D}_k$ of
dyadic cubes whose vertices lie on the lattice $(2^{-k}Z)^n.$ We call
the members of $\mathcal{D}=\bigcup\limits_{k\in Z}\mathcal{D}_k$ dyadic cubes. Given a cube $B\in \mathcal{D},$ we denote by $|B|$ its Lebesgue measure. Observe that two dyadic cubes are either disjoint, or one is contained in the other. For each $x\in R_n$ and $k\in Z$, there is a unique element of $\mathcal{D}_k$ containing $x.$ Moreover, the $\sigma-$algebra $\sigma(\mathcal{D}_k)$ of measurable subsets of $R_n$ formed by countable
unions and complements of elements of $\mathcal{D}_k$ is increasing as $k$ increases.

Given a locally integral function on $R_n,$ we define its dyadic maximal function $M_d(f)$ by
$$M_d(f)(x)=\sup\limits_{x\in B\in\mathcal{D}}\frac{1}{|B|}\int_B|f(y)|dy.$$
Recall that the conditional expectation of a locally integrable function $f$ on $R_n$ with respect
to the increasing family of $\sigma(\mathcal{D}_k)$ is defined as(see, e.g., \cite[P.~384]{L. Grafakos})
$$E_k(f)(x) = \sum\limits_{B\in \mathcal{D}_k}(\frac{1}{|B|}\int_Bf(y)dy)\chi_{B}(x).$$
Then, we have $M_d(f)(x)=\sup\limits_{k\in Z}E_k(|f|)(x).$ Moreover, for suitable $\overrightarrow{f}=(f_1,f_2,...),$ we also have $\mathfrak{M}_d(\overrightarrow{f})(x)=\sup\limits_{k\in Z}
\prod\limits_{i=1}\limits^{\infty}E_k(|f_i|)(x).$

\begin{remark}\label{lem def} Let $p_i>1,~i\in N.$
If $\prod\limits^{\infty}_{i=1}\|f_i\|_{L^{p_i}}<\infty$ and $\sum\limits_{i=1}^{\infty}\frac{1}{p_i}=\frac{1}{p},$
then $$ \|\mathfrak{M}_d(\overrightarrow f)\|_{L^p}\leq\|\prod\limits^{\infty}_{i=1}M_df_i\|_{L^p}\leq \prod\limits_{i=1}^{\infty}\|M_df_i\|_{L^{p_i}}\leq \big(\prod\limits^{\infty}_{i=1}p'_i\big)\prod\limits_{i=1}^{\infty}\|f_i\|_{L^{p_i}}<\infty,$$
where we have used Lemma \ref{C-P-prop-A} and Remark \ref{C-P-series-cor-E}.
\end{remark}

\section{Main Results and Proofs}\label{m re}

There are a lot of assumptions and notations which will be used in the section.
For convenience, we state them at the beginning of this part.

\textbf{Assumptions and Notations}
Let $\omega_i\in L^1_{loc}$ and $1<p_i<\infty$, $i\in N,$ and let $\{f_i\}$ be a sequence of nonnegative measurable function on $R_n.$
Write $\overrightarrow{p}=(p_1,p_2,\cdot\cdot\cdot),$
$\overrightarrow{\omega}=(\omega_1, \omega_2,\cdot\cdot\cdot),$
$\overrightarrow{f}=(f_1,f_2,...)$ and $\sigma_i=\omega_i^{-\frac{1}{p_i-1}},~i\in N.$ In addition, we also write $\overrightarrow{f\chi_G}=(f_1\chi_G,f_2\chi_G,...)$
and $\overrightarrow{\sigma\chi_G}
=(\sigma_1\chi_G,\sigma_2\chi_G,\cdot\cdot\cdot),$ where $G$ is a measurable set.

We suppose that
$\frac{1}{p}=\sum\limits^{\infty}_{i=1}\frac{1}{p_i },$ $\prod\limits_{i=1}^{\infty}E_k(\omega_i^{1-p'_i})^{\frac{1}{p'_i}}<\infty$
and $\prod\limits_{i=1}^{\infty}\sigma_i^{\frac{1}{p_i}}>0.$ We also suppose that $\prod\limits_{i=1}^{\infty}\|f_i\|_{L^{p_i}(\omega_i)}<\infty$ and denote it by $\overrightarrow{f}\in \prod\limits_{i=1}^{\infty}{L^{p_i}(\omega_i)}.$ Moreover, for all $B\in \mathcal{D},$ we assuming that $\overrightarrow{\sigma\chi_B}\in \prod\limits_{i=1}^{\infty}{L^{p_i}(\omega_i)}$ and denote it by $\overrightarrow{\sigma}\in \prod\limits_{i=1}^{\infty}{L_{loc}^{p_i}(\omega_i)}.$

\begin{remark}\label{proper 2} It follows from generalized H\"{o}lder's inequality for integral that $$\int_{R_n}\prod\limits_{i=1}^{\infty}E_k(f_i^{p_i}\omega_i)^{\frac{p}{p_i}}dx
\leq\prod\limits_{i=1}^{\infty}\big(\int_{R_n} E_k(f_i^{p_i}\omega_i)dx\big)^{\frac{p}{p_i}}
=\prod\limits_{i=1}^{\infty}\big(\int_{R_n} f_i^{p_i}\omega_idx\big)^{\frac{p}{p_i}}<\infty.$$
Hence,
$\prod\limits_{i=1}^{\infty}E_k(f_i^{p_i}\omega_i)^{\frac{1}{p_i}}<\infty.$
By H\"{o}lder's inequality and Remark \ref{re-aa}, we have
\be\prod\limits_{i=1}^{\infty}E_k(f_i)
   &\leq&\prod\limits_{i=1}^{\infty}E_k(f_i^{p_i}\omega_i)^{\frac{1}{p_i}}
         E_k(\omega_i^{-\frac{1}{p_i-1}})^{\frac{1}{p'_i}}\\
   &=&\prod\limits_{i=1}^{\infty}E_k(f_i^{p_i}\omega_i)^{\frac{1}{p_i}}
         \prod\limits_{i=1}^{\infty}E_k(\omega_i^{-\frac{1}{p_i-1}})^{\frac{1}{p'_i}}<\infty.\ee
Then $\mathfrak{M}_d(\overrightarrow{f})$ is well defined. Moreover, for $B\in \mathcal{D},$
we have $\prod\limits_{i=1}^{\infty}E_k(\sigma_i\chi_B)<\infty$
and $\mathfrak{M}_d(\overrightarrow{\sigma\chi_B})$ is well defined.\end{remark}

\subsection{Generalized $A_p$ Weights Involving the Infinite Product}

\begin{theorem}\label{theorem_Ap} Let $1<p_i<\infty, i\in N$ and $\frac{1}{p}=\sum\limits^{\infty}_{i=1}\frac{1}{p_i }.$
  Let $v$ and $\omega_i$ be weights. Then the following statements
are equivalent:
  \begin{enumerate}
    \item \label{theorem_Ap_3}There exists a positive constant $C$ such that
       $$(\frac{1}{|B|}\int_Bv(x)dx)^{\frac{1}{p}}
        \prod\limits_{i=1}^{\infty}(\frac{1}{|B|}\int_B\omega_i^{-\frac{1}{p_i-1}}(y_i)dy_i)^{\frac{1}{p_i'}}\leq C,~\forall B\in\mathcal{D};$$
    \item \label{theorem_Ap_1} There exists a positive constant $C$ such that
        $$v(B)^{\frac{1}{p}}\prod\limits_{i=1}^{\infty}(\frac{1}{|B|}\int_Bf_i(y_i)dy_i)\leq
        C\prod\limits^{\infty}_{i=1}\|f_i\chi_B\|_{L^{p_i}(\omega_i)},~\forall \overrightarrow{f}\in \prod\limits_{i=1}^{\infty}{L^{p_i}(\omega_i)},~B\in\mathcal{D};$$
    \item \label{theorem_Ap_2} There exists a positive constant $C$ such that
       $$\lambda v(\{\mathfrak{M}_d(\overrightarrow{f})\geq\lambda\})^{\frac{1}{p}}\leq C\prod\limits^{\infty}_{i=1}\|f_i\|_{L^{p_i}(\omega_i)},~\forall \overrightarrow{f}\in \prod\limits_{i=1}^{\infty}{L^{p_i}(\omega_i)},\ \lambda>0;$$
    \item \label{theorem_Ap_21} There exists a positive constant $C$ such that
       $$\lambda v(\{\mathfrak{M}_d(\overrightarrow{f})>\lambda\})^{\frac{1}{p}}\leq C\prod\limits^{\infty}_{i=1}\|f_i\|_{L^{p_i}(\omega_i)},~\forall \overrightarrow{f}\in \prod\limits_{i=1}^{\infty}{L^{p_i}(\omega_i)},\ \lambda>0.$$
  \end{enumerate}
  Moreover, we denote the smallest constants $C$ in \eqref{theorem_Ap_3}, \eqref{theorem_Ap_1}, \eqref{theorem_Ap_2} and \eqref{theorem_Ap_21}
by $[v,\overrightarrow{\omega}]_{A_{\overrightarrow{p}}},$ $[v,\overrightarrow{\omega}]'_{A_{\overrightarrow{p}}},$ $\|\mathfrak{M}_d\|'$ and $\|\mathfrak{M}_d\|,$ respectively.
Then it follows that
$$[v,\overrightarrow{\omega}]_{A_{\overrightarrow{p}}}=[v,\overrightarrow{\omega}]'_{A_{\overrightarrow{p}}}=\|\mathfrak{M}_d\|'=\|\mathfrak{M}_d\|.$$
  \end{theorem}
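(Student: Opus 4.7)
The plan is to prove the cyclic chain $(1) \Rightarrow (2) \Rightarrow (3) \Rightarrow (4) \Rightarrow (1)$ while tracking the sharp constants at every step, so that the four quantities $[v,\overrightarrow{\omega}]_{A_{\overrightarrow{p}}}$, $[v,\overrightarrow{\omega}]'_{A_{\overrightarrow{p}}}$, $\|\mathfrak{M}_d\|'$ and $\|\mathfrak{M}_d\|$ automatically coincide; the implication $(3) \Rightarrow (4)$ is immediate from $\{\mathfrak{M}_d(\overrightarrow{f}) > \lambda\} \subseteq \{\mathfrak{M}_d(\overrightarrow{f}) \geq \lambda\}$. For $(1) \Rightarrow (2)$, I would apply the ordinary H\"older inequality in the variable $y_i$ on each cube to get $\frac{1}{|B|}\int_B f_i\, dy_i \leq \big(\frac{1}{|B|}\int_B f_i^{p_i}\omega_i\big)^{1/p_i}\big(\frac{1}{|B|}\int_B \sigma_i\big)^{1/p_i'}$, multiply these bounds over $i$, and use $\sum_i 1/p_i = 1/p$ to factor $\prod_i |B|^{-1/p_i} = |B|^{-1/p}$, so that the first family telescopes to $|B|^{-1/p}\prod_i \|f_i \chi_B\|_{L^{p_i}(\omega_i)}$. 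Multiplying by $v(B)^{1/p}$ and invoking (1) on the remaining $\sigma_i$-factors yields (2) with the same constant.

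For $(2) \Rightarrow (3)$, I would use the standard maximal dyadic cube decomposition: for each $\lambda > 0$, $\{\mathfrak{M}_d(\overrightarrow{f}) \geq \lambda\}$ splits as a disjoint union of maximal cubes $\{B_j\} \subset \mathcal{D}$ with $\prod_i \frac{1}{|B_j|}\int_{B_j} f_i \geq \lambda$. Raising (2) to the $p$-th power on each $B_j$ and summing yields
$$\lambda^{p}\sum_j v(B_j) \leq C^{p}\sum_j \prod_i \|f_i\chi_{B_j}\|_{L^{p_i}(\omega_i)}^{p}.$$
I would then close the argument with the discrete generalized H\"older inequality for countably many exponents $q_i = p_i/p$ (noting $\sum_i 1/q_i = p\sum_i 1/p_i = 1$), getting
$$\sum_j \prod_i \|f_i\chi_{B_j}\|_{L^{p_i}(\omega_i)}^{p} \leq \prod_i \Big(\sum_j \|f_i\chi_{B_j}\|_{L^{p_i}(\omega_i)}^{p_i}\Big)^{p/p_i} \leq \prod_i \|f_i\|_{L^{p_i}(\omega_i)}^{p},$$
where disjointness of the $B_j$ powers the last inequality. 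Taking $p$-th roots gives (3).

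The implication $(4) \Rightarrow (1)$ follows by testing on $f_i = \sigma_i\chi_B$ for a fixed $B\in\mathcal{D}$. Choosing $B$ itself in the defining supremum shows $\mathfrak{M}_d(\overrightarrow{f})(x) \geq \prod_i \frac{1}{|B|}\int_B \sigma_i$ for every $x\in B$, so $B\subseteq\{\mathfrak{M}_d(\overrightarrow{f}) > \lambda\}$ whenever $\lambda$ is strictly smaller than this infinite product. Applying (4), using the identity $\|\sigma_i\chi_B\|_{L^{p_i}(\omega_i)}^{p_i} = \int_B\sigma_i\, dy_i$ (which comes from $\sigma_i^{p_i}\omega_i = \sigma_i$), and letting $\lambda$ increase to $\prod_i \frac{1}{|B|}\int_B \sigma_i$, I obtain $v(B)^{1/p}\prod_i \frac{1}{|B|}\int_B\sigma_i \leq C\prod_i\big(\int_B\sigma_i\big)^{1/p_i}$. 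Rewriting the right-hand side as $|B|^{1/p}\prod_i\big(\frac{1}{|B|}\int_B\sigma_i\big)^{1/p_i}$ via $\sum_i 1/p_i = 1/p$ and cancelling the common factor $\prod_i\big(\frac{1}{|B|}\int_B\sigma_i\big)^{1/p_i}$ on both sides leaves exactly (1).

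The main obstacle I anticipate is the bookkeeping for infinite products at every step: manipulations such as $\prod_i |B|^{-1/p_i} = |B|^{-1/p}$, the convergence of $\prod_i\big(\frac{1}{|B|}\int_B\sigma_i\big)^{1/p_i'}$, and the extension of the discrete H\"older inequality to infinitely many exponents must all be justified. The standing hypotheses in the statement (notably $\prod_i \sigma_i^{1/p_i} > 0$ and $\prod_i E_k(\omega_i^{1-p_i'})^{1/p_i'} < \infty$), together with Lemma \ref{C-P-prop-A} and Remark \ref{re-aa}, are exactly what make these operations legitimate. Once convergence is settled, each implication in the cycle is sharp with the same constant $C$, giving the advertised equality $[v,\overrightarrow{\omega}]_{A_{\overrightarrow{p}}} = [v,\overrightarrow{\omega}]'_{A_{\overrightarrow{p}}} = \|\mathfrak{M}_d\|' = \|\mathfrak{M}_d\|$.
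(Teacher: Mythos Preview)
Your cyclic chain $(1)\Rightarrow(2)\Rightarrow(3)\Rightarrow(4)\Rightarrow(1)$ and the implications $(1)\Rightarrow(2)$ and $(4)\Rightarrow(1)$ are correct and essentially match the paper's arguments; the paper routes the last step through $(3)\Rightarrow(2)\Rightarrow(1)$, but your direct testing with $f_i=\sigma_i\chi_B$ and $\lambda\nearrow\prod_i\frac{1}{|B|}\int_B\sigma_i$ is equivalent.

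The gap is in $(2)\Rightarrow(3)$. Your claim that $\{\mathfrak{M}_d(\overrightarrow f)\geq\lambda\}$ splits as a disjoint union of \emph{maximal} dyadic cubes $B_j$ with $\prod_i\frac{1}{|B_j|}\int_{B_j}f_i\geq\lambda$ fails on two counts. First, a point $x$ with $\mathfrak{M}_d(\overrightarrow f)(x)=\lambda$ need not lie in any cube whose product of averages actually attains $\lambda$ (the supremum over the nested cubes containing $x$ may be approached but never achieved), so the closed level set can be strictly larger than the union of such cubes. Second, even on the open level set $\{\mathfrak{M}_d(\overrightarrow f)>\lambda\}$, maximal cubes need not exist: nothing in the hypotheses prevents the product of averages from remaining above $\lambda$ along an infinite increasing chain of dyadic cubes, so there is no largest one to select. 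The paper avoids both obstacles by proving $(2)\Rightarrow(4)$ rather than $(2)\Rightarrow(3)$, and by first truncating to the operator $\mathfrak{M}_d^{(R)}$ that only admits cubes of side length at most $R$; for $\{\mathfrak{M}_d^{(R)}(\overrightarrow f)>\lambda\}$ the maximal cubes do exist, your discrete generalized H\"older argument then goes through verbatim with the same constant, and one finishes by letting $R\to\infty$. With that modification (and the trivial passage $(4)\Rightarrow(3)$ via continuity of the measure from above) your cycle closes sharply and the advertised equality of the four constants follows.
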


\begin{proof} We shall follow the scheme: $\eqref{theorem_Ap_3}\Leftrightarrow\eqref{theorem_Ap_1},$ $\eqref{theorem_Ap_2}\Leftrightarrow\eqref{theorem_Ap_21}$ and $\eqref{theorem_Ap_2}\Rightarrow\eqref{theorem_Ap_1}\Rightarrow\eqref{theorem_Ap_21}.$
And $\eqref{theorem_Ap_2}\Leftrightarrow\eqref{theorem_Ap_21}$ is trivial.

\eqref{theorem_Ap_3}$\Rightarrow$\eqref{theorem_Ap_1} For $B\in \mathcal{D},$
it follows from H\"{o}lder's inequality and \eqref{theorem_Ap_3} that
\be           &&v(B)^{\frac{1}{p}}\prod\limits_{i=1}^{\infty}(\frac{1}{|B|}\int_B f_i(y_i)dy_i)\\
              &\leq&v(B)^{\frac{1}{p}}\prod\limits_{i=1}^{\infty}(\frac{1}{|B|}\int_B f_i^{p_i}(y_i)\omega_i(y_i) dy_i)^{\frac{1}{p_i}}
                      (\frac{1}{|B|}\int_B\omega_i^{-\frac{p'_i}{p_i}}(y_i)dy_i)^{\frac{1}{p'_i}}\\
              &=&\prod\limits_{i=1}^{\infty}(\int_B f_i^{p_i}(y_i)\omega_i(y_i) dy_i)^{\frac{1}{p_i}}
                      \Big((\frac{1}{|B|}\int_Bv(x)dx)^{\frac{1}{p}}\prod\limits_{i=1}^{\infty}(\frac{1}{|B|}
                      \int_B\omega_i^{-\frac{1}{p_i-1}}(y_i)dy_i)^{\frac{1}{p'_i}}\Big)\\
              &\leq& [v,\overrightarrow{\omega}]_{A_{\overrightarrow{p}}}\prod\limits^{\infty}_{i=1}\|f_i\chi_B\|_{L^{p_i}(\omega_i)}. \ee

$\eqref{theorem_Ap_1}\Rightarrow\eqref{theorem_Ap_3}$ Let $f_i=\omega_i^{-\frac{1}{p_i-1}}.$ For $B\in \mathcal{D},$
we have \be(\frac{1}{|B|}\int_Bv(x)dx)^{\frac{1}{p}}
                  \prod\limits_{i=1}^{\infty}(\frac{1}{|B|}\int_B\omega_i^{-\frac{1}{p_i-1}}(y_i)dy_i)
        &=&(\frac{1}{|B|})^{\frac{1}{p}}v(B)^{\frac{1}{p}}\prod\limits_{i=1}^{\infty}(\frac{1}{|B|}\int_B f_i(y_i)dy_i)\\
        &\leq&[v,\overrightarrow{\omega}]'_{A_{\overrightarrow{p}}}(\frac{1}{|B|})^{\frac{1}{p}}\prod\limits_{i=1}^{\infty}
                 (\int_B\omega_i^{-\frac{1}{p_i-1}}(y_i)dy_i)^{\frac{1}{p_i}}\\
        &=&[v,\overrightarrow{\omega}]'_{A_{\overrightarrow{p}}}\prod\limits_{i=1}^{\infty}
                 (\frac{1}{|B|}\int_B\omega_i^{-\frac{1}{p_i-1}}(y_i)dy_i)^{\frac{1}{p_i}}.\ee
It follows that
\be(\frac{1}{|B|}\int_Bv(x)dx)^{\frac{1}{p}}
                  \prod\limits_{i=1}^{\infty}(\frac{1}{|B|}\int_B\omega_i^{-\frac{1}{p_i-1}}(y_i)dy_i)^{\frac{1}{p'_i}}
        \leq [v,\overrightarrow{\omega}]'_{A_{\overrightarrow{p}}}.\ee

$\eqref{theorem_Ap_2}\Rightarrow\eqref{theorem_Ap_1}$ Let $B\in \mathcal{D}.$ For $x\in B,$ we have
$$\prod\limits_{i=1}^{\infty}(\frac{1}{|B|}\int_Bf_i(y_i)dy_i)\leq\mathfrak{M}_d(\overrightarrow{f\chi_B})(x).$$
It follows from $\eqref{theorem_Ap_2}$ that
\be \prod\limits_{i=1}^{\infty}(\frac{1}{|B|}\int_B f_i(y_i)dy_i)v(B)^{\frac{1}{p}}
       &\leq&\lambda v(\{\mathfrak{M}_d(\overrightarrow{f\chi_B})\geq\lambda\})^{\frac{1}{p}}\\
       &\leq& \|\mathfrak{M}_d\|'\prod\limits^{\infty}_{i=1}\|f_i\chi_B\|_{L^{p_i}(\omega_i)},\ee
where $\lambda=\prod\limits_{i=1}^{\infty}(\frac{1}{|B|}\int_B f_i(y_i)dy_i).$

\eqref{theorem_Ap_1}$\Rightarrow$\eqref{theorem_Ap_21} For $R>0,$ we shall denote by $\mathfrak{M}_d^{(R)}(\overrightarrow{f}),$
the maximal operators obtained by taking in the corresponding definition just those cubes whose side length is less than or equal to $R.$
Since $\mathfrak{M}_d(\overrightarrow{f})(x)=\lim\limits_{R\rightarrow\infty}\mathfrak{M}_d^{(R)}(\overrightarrow{f})(x)$ and $\mathfrak{M}_d^{(R)}(\overrightarrow{f})(x)$ increases with $R.$ It will be enough to prove the inequality for $\mathfrak{M}_d^{(R)}$ with constant independent of $R.$ But, after fixing $R>0$ and $\lambda>0,$ observe that $\{x\in R_n:\mathfrak{M}_d^{(R)}(\overrightarrow{f})(x)>\lambda\}=\bigcup\limits_{j}Q_j,$ where $Q_j$ are the maximal dyadic cubes of side length less than or equal to $R$ for which $$\prod\limits_{i=1}^{\infty}(\frac{1}{|Q_j|}\int_{Q_j}f_i(y_i)dy_i)>\lambda.$$
These maximal dyadic cubes do exist because of the restriction on their size. Moreover, they are disjoint. It follows from \eqref{theorem_Ap_1} and the generalized H\"{o}lder's inequality that
\be \lambda^p v(\{x\in R_n:\mathfrak{M}_d^{(R)}(\overrightarrow{f})>\lambda\})
             &=&\lambda^p\sum\limits_{j}v(Q_j)\\
             &\leq&\sum\limits_{j}v(Q_j)(\prod\limits_{i=1}^{\infty}(\frac{1}{|Q_j|}\int_{Q_j}f_i(y_i)dy_i))^p\\
             &\leq&([v,\overrightarrow{\omega}]'_{A_{\overrightarrow{p}}})^p\sum\limits_{j}\prod\limits^{\infty}_{i=1}\|f_i\chi_{Q_j}\|_{L^{p_i}(\omega_i)}^p\\
             &\leq&([v,\overrightarrow{\omega}]'_{A_{\overrightarrow{p}}})^p\prod\limits^{\infty}_{i=1}(\sum\limits_{j}\int_{Q_j}f_i^{p_i}(y_i)\omega_i(y_i)dy_i)^{\frac{p}{p_i}}\\
             &\leq&([v,\overrightarrow{\omega}]'_{A_{\overrightarrow{p}}}\prod\limits^{\infty}_{i=1}\|f_i\|_{L^{p_i}(\omega_i)})^p.\ee
Thus
$$\lambda v(\{\mathfrak{M}(\overrightarrow{f})>\lambda\})^{\frac{1}{p}}\leq [v,\overrightarrow{\omega}]'_{A_{\overrightarrow{p}}}\prod\limits^{\infty}_{i=1}\|f_i\|_{L^{p_i}(\omega_i)}.$$
\end{proof}

The following Theorem \ref{ttheorem_Sp} is essentially taken from J. Garc\'{i}a-Cuerva and J. L. Rubio de Francia \cite[P.~423]{Garcia Rubio}.
In this paper, we refine the result by a limit process. Moreover, combining with Remark \ref{remark con}, we can get Corollary \ref{coro_Sp}.

\begin{theorem}\label{ttheorem_Sp}
Let $\omega$ be a weight and $1<
p<\infty.$ Suppose that $\sigma=\omega^{-\frac{1}{p-1}}\in L^1_{loc},$ then the
following statements are equivalent:
\begin{enumerate}
\item \label{tthm A 1}There exists a positive constant $C$ such that
\be
\|M_d(f)\|_{L^p(\omega)}\leq
C\|f\|_{L^{p}(\omega)},
~\forall f\in L^{p}(\omega);
\ee
\item \label{tthm A 3} There exists a positive constant $C$ such that
$$\frac{\omega(B)}{|B|}(\frac{\sigma(B)}{|B|})^{p-1}\leq C,~\forall B\in \mathcal{D}.$$
\end{enumerate}
Moreover, we denote the smallest constants $C$ in \eqref{tthm A 1}
and \eqref{tthm A 3} by $\|M_d\|$ and $[\omega]_{A_p},$ respectively.
Then it follows that
$$[\omega]_{A_p}
=\|M_d\|.$$
\end{theorem}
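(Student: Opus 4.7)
The plan is to adapt the classical Muckenhoupt--Garc\'{\i}a-Cuerva--Rubio de Francia characterization of $A_p$ for $M$ to the purely dyadic setting, tracking constants carefully so that the refinement announced by the authors --- the identification of $[\omega]_{A_p}$ with the operator norm through a limiting process --- will fall out of the proof.

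For \eqref{tthm A 1}$\Rightarrow$\eqref{tthm A 3}, I will fix $B \in \mathcal{D}$ and test the strong-type inequality on $f_N := (\sigma \wedge N)\chi_B$; the truncation avoids having to assume $\sigma(B) < \infty$ a priori and is precisely the ``limit process'' the authors flag. Since $B \in \mathcal{D}$ contains every $x \in B$, one has the pointwise bound $M_d f_N(x) \geq \frac{1}{|B|}(\sigma\wedge N)(B)$ on $B$. Raising to the $p$-th power, integrating against $\omega$ over $B$, using $\sigma^{p-1}\omega \equiv 1$ a.e., and noting $(\sigma\wedge N)^{p} \leq (\sigma\wedge N)\sigma^{p-1}$, one obtains
\[
\Big(\frac{(\sigma\wedge N)(B)}{|B|}\Big)^p \omega(B) \leq \|M_d\|^p \int_B (\sigma\wedge N)^p\,\omega\,dx \leq \|M_d\|^p (\sigma\wedge N)(B).
\]
Cancelling $(\sigma\wedge N)(B)$, which is finite by construction, and sending $N \to \infty$ by monotone convergence will produce the dyadic $A_p$ inequality.

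For \eqref{tthm A 3}$\Rightarrow$\eqref{tthm A 1}, I will use the standard maximal-cube decomposition. For $f \geq 0$ and $k \in \Z$, let $\{Q^k_j\}_j$ denote the maximal dyadic cubes on which the average of $f$ exceeds $2^k$; these are pairwise disjoint and, by the usual ancestor bound, $\frac{1}{|Q^k_j|}\int_{Q^k_j}f \leq 2^{k+1}$. H\"{o}lder's inequality on $Q^k_j$ gives
\[
\Big(\frac{1}{|Q^k_j|}\int_{Q^k_j}f\,dy\Big)^p \leq \Big(\frac{\sigma(Q^k_j)}{|Q^k_j|}\Big)^{p-1}\frac{1}{|Q^k_j|}\int_{Q^k_j}f^p\omega\,dy,
\]
and the $A_p$ hypothesis will convert this into $\omega(Q^k_j)\bigl(\tfrac{1}{|Q^k_j|}\int_{Q^k_j}f\bigr)^p \leq [\omega]_{A_p}\int_{Q^k_j}f^p\omega\,dy$. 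Summing over $j$ (disjointness) and then over $k$ via the layer-cake formula will deliver the strong-type bound.

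The hard part will be sharpening the constants to match as the statement demands. Testing yields $[\omega]_{A_p} \leq \|M_d\|^p$ almost immediately, but the decomposition direction naturally carries slack from the factor between the level $2^k$ and the actual average on $Q^k_j$, and from any naive iteration of a weak-type bound. The authors' limit refinement is what should remove this slack: approximating $\sigma$ (and $f$) by bounded truncations will keep every Carleson-type sum visibly finite, and the precise constants should survive the passage to the limit via Remark~\ref{remark con}.
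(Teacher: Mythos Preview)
For \eqref{tthm A 3}$\Rightarrow$\eqref{tthm A 1} your route departs substantially from the paper's and contains a gap. The paper does not combine the single-cube H\"older estimate $\omega(Q^k_j)\bigl(|Q^k_j|^{-1}\int_{Q^k_j}f\bigr)^p\le[\omega]_{A_p}\int_{Q^k_j}f^p\omega$ with a layer-cake sum; it runs the Garc\'{\i}a-Cuerva--Rubio de Francia argument instead: slice into shells $S_k=\{\alpha^k<M_df\le\alpha^{k+1}\}$ with a free parameter $\alpha>1$, cover by cubes $B_{k,j}$ and disjoint pieces $E_{k,j}$, rewrite $\sum_{k,j}\mu_{k,j}g_{k,j}$ as an integral over a discrete measure space with $\mu_{k,j}=\omega(E_{k,j})(\sigma(B_{k,j})/|B_{k,j}|)^p$, and then bound $\mu(\Gamma(\lambda))$ by invoking the $L^{p'}(\omega)$-boundedness of the \emph{weighted} dyadic maximal operator $M_d^\omega$ followed by the $L^p(\sigma)$-boundedness of $M_d^\sigma$. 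The ``limit process'' announced by the authors is simply sending $\alpha\to1$ at the very end, which removes only the factor $\alpha^p$; the final bound reads $\|M_d\|\le p^{p'/p}p'\,[\omega]_{A_p}^{p'/p}$.

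You have misread this limit as a truncation of $\sigma$, and your appeal to Remark~\ref{remark con} is off target---that remark concerns the convergence of the infinite product $\prod_i p_i^{p_i'/p_i}p_i'$ over a sequence of exponents and has no bearing on a single-$p$ theorem. More seriously, truncation cannot repair the overcounting in your scheme: your estimate controls $\sum_j 2^{kp}\omega(Q^k_j)$ by $[\omega]_{A_p}\|f\|_{L^p(\omega)}^p$ for each fixed $k$, but the families $\{Q^k_j\}_j$ overlap heavily across different $k$, so summing in $k$ diverges. The paper's detour through the two weighted maximal operators $M_d^\omega$ and $M_d^\sigma$ is precisely the mechanism that handles this; without it (or an equivalent sparse/Carleson device, which you do not supply) the argument does not close. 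As a minor aside, the ancestor bound on the average over a maximal dyadic cube in $\R^n$ is $2^n\cdot2^k$, not $2^{k+1}$.
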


\begin{proof} It is clear that $\eqref{tthm A 1}\Rightarrow\eqref{tthm A 3},$
so we omit it.

$\eqref{tthm A 3}\Rightarrow\eqref{tthm A 1}$ It suffices to prove $\eqref{tthm A 1}$ for nonnegative functions.
Let $f$ be a nonnegative function in
$L^{p}(\omega)$ and let $\alpha>1.$ For every integer $k,$ we shall consider the set
$$S_k=\{x\in R_n: \alpha^k<M_df\leq \alpha^{k+1}\}.$$
From the definition of $M_df,$ $S_k\subseteq\bigcup\limits_{j}B_{k,j},$ where $B_{k,j}\in \mathcal{D}$ satisfies
$$\frac{1}{|B_{k,j}|}\int_{B_{k,j}}f(y)dy>\alpha^k.$$
Define $E_{k,1}=B_{k,1}\cap S_k$ and for $j>1:$
$$E_{k,j}=(B_{k,j}\backslash\bigcup\limits_{s<j}B_{k,s})\cap S_k.$$
The sets $S_k$ form a disjoint collection and each $S_k$ is disjoint union of the sets $E_{k,j}$ for varying $j.$
It follows that
\be
\int_{R_n}M_d(f)^p \omega dx
   &\leq&\alpha^p\sum\limits_{k\in Z,j\in Z}\omega(E_{k,j})(\frac{1}{|B_{k,j}|}\int_{B_{k,j}}f(x)dx)^p\\
   &=&\alpha^p\sum\limits_{k\in Z,j\in Z}\mu_{k,j}g_{k,j},\ee
where $$\mu_{k,j}=\omega(E_{k,j})(\frac{\sigma(B_{k,j})}{|B_{k,j}|})^p\hbox{ and } g_{k,j}=(\frac{1}{\sigma(B_{k,j})}\int_{B_{k,j}}f(y)\sigma(y)^{-1}\sigma(y)dy)^p.$$
We view the sum $\sum\limits_{k\in Z,j\in Z}\mu_{k,j}g_{k,j}$ as an integral on a measure space $(X,\mu)$ built over the set
$X=(\{(k,j)\})$ by assigning to each $(k,j)$ the measure $\mu_{k,j}.$ For $\lambda>0,$ call
$$\Gamma(\lambda)=\{(k,j)\in X:g_{k,j}>\lambda\}$$
$$G(\lambda)=\bigcup\limits_{(k,j)\in \Gamma(\lambda)}B_{k,j}.$$
Then $\sum\limits_{k\in Z,j\in Z}\mu_{k,j}g_{k,j}=\int_0^{\infty}\mu(\Gamma(\lambda))d\lambda.$
Observe that $\eqref{tthm A 3}$ is equivalent to saying that
$$(\frac{\sigma(B_{k,j})}{|B_{k,j}|})^p\leq [\omega]_{A_p}^{p'}(\frac{|B_{k,j}|}{\omega(B_{k,j})})^{p'}.$$
We use this to estimate $\mu_{k,j},$
\be \mu_{k,j}&\leq&[\omega]_{A_p}^{p'}\omega(E_{k,j})(\frac{|B_{k,j}|}{\omega(B_{k,j})})^{p'}\\
             &\leq&[\omega]_{A_p}^{p'}\int_{E_{k,j}}M_d^\omega(\chi_{B_{k,j}}\omega^{-1})^{p'}(x)\omega(x)dx.
\ee
Next we shall use the boundedness of $M_d^\omega$ to estimate $\mu(\Gamma(\lambda)).$

Then
\be\mu(\Gamma(\lambda))&=&\sum\limits_{(k,j)\in \Gamma(\lambda)}\mu_{k,j}\\
                       &\leq&[\omega]_{A_p}^{p'}\sum\limits_{(k,j)\in \Gamma(\lambda)}\int_{E_{k,j}}M_d^\omega(\chi_{B_{k,j}}\omega^{-1})^{p'}(x)\omega(x)dx\\
                       &\leq&[\omega]_{A_p}^{p'}\int_{G(\lambda)}M_d^\omega(\chi_{G(\lambda)}\omega^{-1})^{p'}(x)\omega(x)dx\\
                       &\leq&[\omega]_{A_p}^{p'}p^{p'}\int_{G(\lambda)}\sigma(x)dx\\
                       &\leq&[\omega]_{A_p}^{p'}p^{p'}
                       \int_{\{(M_d^\sigma(f\sigma^{-1}))^p>\lambda\}}\sigma(x)dx.
                       \ee
Now,
\be
\int_{R_n}M_d(f)^p \omega dx
   &\leq&\alpha^p\int_0^{\infty}\mu(\Gamma(\lambda))d\lambda\\
   &\leq&\alpha^pp^{p'}[\omega]_{A_p}^{p'}
                       \int_0^{\infty}\int_{\{(M_d^\sigma(f\sigma^{-1}))^p>\lambda\}}\sigma(x)dx d\lambda\\
   &=&\alpha^pp^{p'}[\omega]_{A_p}^{p'}
                       \int_{R_n} M_d^\sigma(f\sigma^{-1})^p\sigma dx\\
   &\leq&\alpha^pp^{p'}{p'}^p[\omega]_{A_p}^{p'}
                       \int_{R_n}f^p\omega dx.
\ee
Then we can take the limit $\alpha\rightarrow 1,$ which gives
$$(\int_\Omega M_d(f)^p \omega dx)^{\frac{1}{p}}\leq [\omega]_{A_p}^{\frac{ p'}{p}} p^{\frac{p'}{p}} p' (\int_\Omega f^p\omega dx)^{\frac{1}{p}}.$$
\end{proof}

\begin{definition} \label{product Ap}Let $1<p_i<\infty, i\in N$ and $\frac{1}{p}=\sum\limits^{\infty}_{i=1}\frac{1}{p_i }.$
Let $\omega_i\in A_{p_i},~i\in N.$
We say that the weight vector $\overrightarrow{\omega}$
satisfies the condition $A^*_{\overrightarrow{p}}$ involving the infinite product, if
$$\prod\limits^{\infty}_{i=1}[\omega_i]_{A_{p_i}}^{\frac{ 1}{p_i}}<\infty.$$ \end{definition}

\begin{cor}\label{coro_Sp} Let $1<p_i<\infty, i\in N$ and $\frac{1}{p}=\sum\limits^{\infty}_{i=1}\frac{1}{p_i }.$
If $\sum\limits^{\infty}_{i=1}\frac{\ln p_i}{p_i}<\infty$ and the weight vector $\overrightarrow{\omega}$
satisfies the condition $A^*_{\overrightarrow{p}},$
then
\be
\|\mathfrak{M}_d(\overrightarrow{f})\|_{L^p(v)}\leq
C\prod\limits^{\infty}_{i=1}\|f_i\|_{L^{p_i}(\omega_i)},
~\forall \overrightarrow{f}\in \prod\limits_{i=1}^{\infty}{L^{p_i}(\omega_i)},
\ee
where $v=\prod\limits_{i=1}^{\infty}\omega_i^{\frac{ 1}{p_i}}$ and $C=\prod\limits^{\infty}_{i=1}[\omega_i]_{A_{p_i}}^{\frac{ p_i'}{p_i}}
\prod\limits^{\infty}_{i=1}{p_i}^{\frac{p_i'}{p_i}}\prod\limits^{\infty}_{i=1}p_i'.$
\end{cor}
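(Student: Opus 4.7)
The plan is to chain three well-established ingredients: a pointwise reduction to scalar dyadic maximal functions, the generalized H\"{o}lder inequality of Lemma \ref{C-P-prop-A}, and the scalar weighted bound of Theorem \ref{ttheorem_Sp}. First I would note the pointwise bound
$$\mathfrak{M}_d(\overrightarrow{f})(x) \leq \prod_{i=1}^{\infty} M_d f_i(x),$$
which is immediate from the identity $\mathfrak{M}_d(\overrightarrow{f})(x) = \sup_{k \in Z} \prod_{i=1}^{\infty} E_k(|f_i|)(x)$, the elementary inequality $E_k(|f_i|)(x) \leq M_d f_i(x)$, and the monotonicity statement Remark \ref{re-aa}.

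Second, setting $g_i(x) := (M_d f_i)(x)\,\omega_i(x)^{1/p_i}$, one has $\|g_i\|_{L^{p_i}} = \|M_d f_i\|_{L^{p_i}(\omega_i)}$ and $\prod_i g_i = v \cdot \prod_i M_d f_i$, so Lemma \ref{C-P-prop-A} applied with the exponents $p_i$ (whose reciprocals sum to $1/p$) yields
$$\Bigl\|\prod_{i=1}^{\infty} M_d f_i\Bigr\|_{L^p(v)} \leq \prod_{i=1}^{\infty}\|M_d f_i\|_{L^{p_i}(\omega_i)}.$$
Third, since each $\omega_i \in A_{p_i}$, Theorem \ref{ttheorem_Sp} applied to every factor gives
$$\|M_d f_i\|_{L^{p_i}(\omega_i)} \leq [\omega_i]_{A_{p_i}}^{p_i'/p_i}\, p_i^{p_i'/p_i}\, p_i'\, \|f_i\|_{L^{p_i}(\omega_i)}.$$
Taking the infinite product over $i$ and combining with the previous two steps produces the claimed inequality with constant $C = \bigl(\prod_i [\omega_i]_{A_{p_i}}^{p_i'/p_i}\bigr)\bigl(\prod_i p_i^{p_i'/p_i}\bigr)\bigl(\prod_i p_i'\bigr)$.

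It remains to verify the finiteness of $C$. The factor $\prod_i p_i'$ converges by Remark \ref{C-P-series-cor-E}; the factor $\prod_i p_i^{p_i'/p_i}$ converges by Remark \ref{remark con} under the hypothesis $\sum \ln p_i/p_i < \infty$; and the $A^*_{\overrightarrow{p}}$ hypothesis together with the same summability controls the first factor, using that $p_i'/p_i = 1/(p_i-1)$ is comparable to $1/p_i$ in the relevant regime where $p_i \to \infty$ (forced by $\sum 1/p_i = 1/p < \infty$). The main obstacle is bookkeeping rather than conceptual depth: one must verify the convergence of each of the three infinite products in $C$, and check that the application of Lemma \ref{C-P-prop-A} is legitimate, which requires $\prod_i \|g_i\|_{L^{p_i}} < \infty$; this latter fact follows from $C < \infty$ combined with the assumed $\prod_i \|f_i\|_{L^{p_i}(\omega_i)} < \infty$. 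Beyond this accounting, the corollary follows immediately from the already-established scalar theory.
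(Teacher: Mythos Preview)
Your approach is exactly what the paper intends: the paper does not write out a proof but merely says the corollary follows from Theorem \ref{ttheorem_Sp} ``combining with Remark \ref{remark con}'', and your chain --- pointwise domination $\mathfrak{M}_d(\overrightarrow{f})\le \prod_i M_d f_i$, the generalized H\"older inequality (Lemma \ref{C-P-prop-A}), then the scalar bound of Theorem \ref{ttheorem_Sp} on each factor, with Remarks \ref{C-P-series-cor-E} and \ref{remark con} and the $A^*_{\overrightarrow p}$ hypothesis guaranteeing convergence of the resulting infinite product --- is precisely that argument, the weighted analogue of the computation already displayed in Remark \ref{lem def}.

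One small bookkeeping slip: with $v=\prod_i \omega_i^{1/p_i}$ as written and $g_i=(M_d f_i)\omega_i^{1/p_i}$, you correctly have $\prod_i g_i = v\prod_i M_d f_i$, but then $\|\prod_i g_i\|_{L^p}=\|\prod_i M_d f_i\|_{L^p(v^p)}$, not $\|\prod_i M_d f_i\|_{L^p(v)}$ in the paper's convention $\|h\|_{L^p(v)}^p=\int |h|^p v\,dx$. The inequality you actually derive is the one for the weight $\prod_i \omega_i^{p/p_i}$ (the standard multilinear choice), so the discrepancy sits in the exponent on $\omega_i$ in the definition of $v$ rather than in the method.
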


\subsection{Generalized $S_p$ Weight Involving the Infinite Product}
\begin{definition} \label{Rh}Let $1<p_i<\infty, i\in N$ and $\frac{1}{p}=\sum\limits^{\infty}_{i=1}\frac{1}{p_i }.$
Let $\omega_i$ be weights and let $\sigma_i=\omega_i^{-\frac{1}{p_i-1}},~i\in N.$
We say that the weight vector $\overrightarrow{\omega}$
satisfies the reverse H\"{o}lder's condition $RH_{\overrightarrow{p}},$ if
there exists a positive constant $C$ such that
\begin{equation}\label{RH cons}\prod\limits_{i=1}^{\infty}\big(\frac{1}{|B|}\int_B\sigma_idx\big)^{\frac{p}{p_i}}
\leq C\frac{1}{|B|}\int_B\prod\limits_{i=1}^{\infty}\sigma_i^{\frac{p}{p_i}}dx,~\forall B\in \mathcal{D}.\end{equation}
Moreover, we denote the smallest constants $C$ in \eqref{RH cons} by $[\overrightarrow{\omega}]_{RH_{\overrightarrow{p}}}$
\end{definition}

The following Lemma \ref{Carleson_lemma_multi} is a  formulation of the Carleson embedding theorem involving the infinite product. The linear one can be found in \cite{Hytonen}.

\begin{lemma}\label{Carleson_lemma_multi} Let $1<p_i<\infty, i\in N$ and $\frac{1}{p}=\sum\limits^{\infty}_{i=1}\frac{1}{p_i }.$
Let $\omega_i$ be weights and let $\sigma_i=\omega_i^{-\frac{1}{p_i-1}},~i\in N.$ Suppose that the nonnegative numbers $\{a_B\}_{B\in \mathcal{D}}$ satisfy

\begin{equation}\label{Carleson_assumption}
 \sum_{B\subset G} a_B \leq A \int_G \prod_{i=1}^{\infty} \sigma_i^{\frac{p}{p_i}}dx, \, \forall G \in \mathcal{D}.
\end{equation}
Then for all $\overrightarrow{f}\in \prod\limits_{i=1}^{\infty}{L^{p_i}(\sigma_i)}$, we have

\begin{equation}\label{Carleson}\begin{split}
\left(\sum_{B\in \mathcal{D}} a_B \Big(\prod_{i=1}^{\infty} \frac{1}{\sigma_i(B)}\int_{B}f_i(y_i)\sigma_i(y_i)d y_i\Big)^p\right)^{1/p} \leq& A^{\frac{1}{p}} ||\mathfrak{M}_d^{\overrightarrow{\sigma}}(\overrightarrow f)||_{L^p(\nu_{\overrightarrow \sigma})} \\
\leq& A^{\frac{1}{p}} \prod_{i=1}^{\infty} p'_i ||f_i||_{L^{p_i}(\sigma_i)},
\end{split}\end{equation}
where $\nu_{\overrightarrow \sigma}=\prod_{i=1}^{\infty} \sigma_i^{\frac{p}{p_i}}$ and $\mathfrak{M}_d^{\overrightarrow{\sigma}}(\overrightarrow{f})(x)=\sup\limits_{x\in B\in\mathcal{D}}\prod_{i=1}^{\infty} \frac{1}{\sigma_i(B)}\int_B f_i(y_i)\sigma_i(y_i)dy_i$.

\end{lemma}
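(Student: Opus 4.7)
The plan is to establish the two inequalities of \eqref{Carleson} independently: the first by combining the layer-cake formula with the classical Carleson decomposition into maximal dyadic cubes, and the second by pointwise dominating $\mathfrak{M}_d^{\overrightarrow{\sigma}}(\overrightarrow{f})$ by an infinite product of one-parameter weighted dyadic maximal functions, after which H\"older plus Doob finishes the job.

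For the first inequality I would set $\alpha_B\triangleq\prod_{i=1}^{\infty}\frac{1}{\sigma_i(B)}\int_B f_i\sigma_i\,dy_i$, which lies in $[0,\mathfrak{M}_d^{\overrightarrow{\sigma}}(\overrightarrow{f})(x)]$ for every $x\in B$. The layer-cake identity together with Tonelli on $\mathcal{D}\times(0,\infty)$ (counting measure on the first factor) gives
$$\sum_{B\in \mathcal{D}}a_B\alpha_B^p=p\int_0^{\infty}\lambda^{p-1}\Big(\sum_{B:\alpha_B>\lambda}a_B\Big)d\lambda.$$
For each $\lambda>0$ the dyadic-open set $E_\lambda\triangleq\{\mathfrak{M}_d^{\overrightarrow{\sigma}}(\overrightarrow{f})>\lambda\}$ decomposes into a disjoint family of maximal dyadic cubes $\{G_{\lambda,j}\}$, and any $B$ with $\alpha_B>\lambda$ must sit inside some $G_{\lambda,j}$. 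Applying the Carleson hypothesis \eqref{Carleson_assumption} to each $G_{\lambda,j}$ and summing in $j$ produces
$$\sum_{B:\alpha_B>\lambda}a_B\leq A\sum_j\int_{G_{\lambda,j}}\nu_{\overrightarrow{\sigma}}\,dx= A\int_{E_\lambda}\nu_{\overrightarrow{\sigma}}\,dx,$$
so a second layer-cake step turns the outer $\lambda$-integral into $A\|\mathfrak{M}_d^{\overrightarrow{\sigma}}(\overrightarrow{f})\|_{L^p(\nu_{\overrightarrow{\sigma}})}^p$, which is the first inequality after taking $p$-th roots.

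For the second inequality, for each $B\ni x$ and each index $i$ the scalar bound $\frac{1}{\sigma_i(B)}\int_B f_i\sigma_i\leq M_d^{\sigma_i}(f_i)(x)$ is trivial, so Remark \ref{re-aa} preserves convergence of the majorising product and the supremum yields
$$\mathfrak{M}_d^{\overrightarrow{\sigma}}(\overrightarrow{f})(x)\leq \prod_{i=1}^{\infty}M_d^{\sigma_i}(f_i)(x),$$
where $M_d^{\sigma_i}$ is the dyadic maximal operator on $(R_n,\sigma_i\,dx)$. Raising to the $p$-th power, multiplying by $\nu_{\overrightarrow{\sigma}}=\prod_i\sigma_i^{p/p_i}$ and applying H\"older's inequality in $(R_n,dx)$ with the exponents $\{p_i/p\}_{i}$ (whose reciprocals sum to $1$) gives
$$\int_{R_n}\prod_{i=1}^{\infty}(M_d^{\sigma_i}f_i)^p\sigma_i^{p/p_i}\,dx\leq \prod_{i=1}^{\infty}\Big(\int_{R_n}(M_d^{\sigma_i}f_i)^{p_i}\sigma_i\,dx\Big)^{p/p_i}.$$
The weighted Doob inequality on $(R_n,\sigma_i\,dx)$ bounds each factor by $(p_i')^p\|f_i\|_{L^{p_i}(\sigma_i)}^p$, and the finiteness of $\prod_i p_i'$ from Remark \ref{C-P-series-cor-E} guarantees that the overall constant is finite and matches the claimed $\prod_i p_i'\|f_i\|_{L^{p_i}(\sigma_i)}$.

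The main technical hurdle I expect is the rigorous bookkeeping around the infinite products: verifying that $\alpha_B$ and the majorising product $\prod_i M_d^{\sigma_i}(f_i)$ are genuinely well-defined (in the sense of Remark \ref{re-bb}), that the pointwise comparison survives the supremum over $B\in\mathcal{D}$, and that the ``countable'' H\"older inequality used above is legitimate—this last point is precisely supplied by Lemma \ref{C-P-prop-A}. Once those convergence issues are in place, both the Carleson extraction of maximal cubes in the first step and the weighted Doob inequality in the second step are entirely classical.
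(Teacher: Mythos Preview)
Your argument follows the same two-step route as the paper: layer-cake on $(\mathcal{D},\{a_B\})$, cover by maximal dyadic cubes, apply the Carleson hypothesis \eqref{Carleson_assumption}, and then for the second inequality use the pointwise bound $\mathfrak{M}_d^{\overrightarrow{\sigma}}(\overrightarrow f)\le\prod_i M_d^{\sigma_i}(f_i)$ together with the generalized H\"older inequality (Lemma~\ref{C-P-prop-A}) and the $L^{p_i}(\sigma_i)$ bound $\|M_d^{\sigma_i}\|\le p_i'$.

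There is one technical slip worth fixing. You decompose $E_\lambda=\{\mathfrak{M}_d^{\overrightarrow{\sigma}}(\overrightarrow f)>\lambda\}$ directly into \emph{maximal} dyadic cubes $\{G_{\lambda,j}\}$, but since $\mathcal{D}$ contains cubes of arbitrarily large side length, such maximal cubes need not exist (an increasing tower of dyadic cubes can sit inside $E_\lambda$ with no largest element). The paper avoids this by first restricting to cubes of side length at most $R$: among the finite-scale family $\{B:\alpha_B>\lambda,\ \text{side}(B)\le R\}$ maximal cubes exist trivially, the hypothesis \eqref{Carleson_assumption} applies to each, and the resulting bound on $\mu(\mathcal{D}_\lambda(R))$ is independent of $R$, so one lets $R\to\infty$. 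Inserting this truncation makes your proof complete and essentially identical to the paper's.
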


\begin{proof}
  Let us see the sum
    $$\sum_{B\in\mathcal{D}} a_B \left(\prod_{i=1}^{\infty} \frac{1}{\sigma_i(B)}\int_B f_i(y_i)\sigma_i(y_i)dy_i \right)^p$$
  as an integral on a measure space $(\mathcal{D},2^{\mathcal{D}},\mu)$ built over the set of dyadic cubes $\mathcal{D}$, assigning to each $B\in\mathcal{D}$ the measure $a_B$. Thus

  \begin{equation*}\begin{split}
  &\sum_{B\in\mathcal{D}} a_B \left(\prod_{i=1}^{\infty} \frac{1}{\sigma_i(B)}\int_B f_i(y_i)\sigma_i(y_i)dy_i \right)^p = \\ &=
  \int_0^{\infty} p \lambda^{p-1} \mu\left\{B\in\mathcal{D}: \prod_{i=1}^{\infty} \frac{1}{\sigma_i(B)} \int_B f_i(y_i)\sigma_i(y_i)dy_i>\lambda\right\} \\ &=: \int_{0}^{\infty} p \lambda^{p-1}\mu(\mathcal{D}_{\lambda})d\lambda.
  \end{split}\end{equation*}

  Let us denote by $\mathcal{D}_{\lambda}(R)$ the dyadic cubes having side length $\leq R$ such that $$\prod_{i=1}^{\infty} \frac{1}{\sigma_i(B)}\int_B f_i(y_i)\sigma_i(y_i)dy_i >\lambda.$$
  Since all the cubes in $\mathcal{D}_{\lambda}(R)$ have side length less than or equal to $R,$ every cube will be contained in maximal one. Let $\mathcal{D}^*_{\lambda}(R)$ denote the subfamily formed by these maximal cubes.
Then the cubes $Q\in\mathcal{D}_{\lambda}^*(R)$ are disjoint and their union is contained in the set $\{\mathfrak{M}_d^{\overrightarrow{\sigma}}(\overrightarrow f)>\lambda\}$. Thus

  \begin{equation*}\begin{split}
    \mu(\mathcal{D}_{\lambda}(R)) &= \sum_{B\in\mathcal{D}_{\lambda}(R)} a_B \leq \sum_{Q\in\mathcal{D}_{\lambda}^{*}(R)}\sum_{B\subset Q} a_B \\
    &\leq A \sum_{Q\in\mathcal{D}^{*}_{\lambda}(R)} \int_Q \prod_{i=1}^{\infty} \sigma_i^{\frac{p}{p_i}} dx \\
    &\leq A \int_{\{\mathfrak{M}_{\overrightarrow{\sigma}}^d(\overrightarrow f)>\lambda\}} \prod_{i=1}^{\infty}\sigma_i^{\frac{p}{p_i}}dx.
  \end{split}\end{equation*}
It follows that $\mu(\mathcal{D}_{\lambda})\leq A \int_{\{\mathfrak{M}_{\overrightarrow{\sigma}}^d(\overrightarrow f)>\lambda\}} \prod_{i=1}^{\infty}\sigma_i^{\frac{p}{p_i}}dx.$
  Then we obtain

  \begin{equation*}\begin{split}
    \sum_{B\in\mathcal{D}} a_B \left(\prod_{i=1}^{\infty} \frac{1}{\sigma_i(B)}\int_B f_i(y_i)\sigma_i(y_i)dy_i \right)^p &\leq A\int_{0}^\infty p \lambda^{p-1} \int_{\{\mathfrak{M}^{\overrightarrow\sigma}_d(\overrightarrow f)>\lambda\}} \prod_{i=1}^{\infty} \sigma_i^{\frac{p}{p_i}} dx d\lambda \\
    &= A \int_{R_n} \mathfrak{M}^{\overrightarrow{\sigma}}_d(\overrightarrow f)^p \prod_{i=1}^{\infty} \sigma_i^{\frac{p}{p_i}}dx \\
    &\leq A \int_{R_n} \prod_{i=1}^{\infty} ((M^{\sigma_i}_d(f_i))^{p_i}\sigma_i)^{\frac{p}{p_i}}dx \\
    &\leq A \prod_{i=1}^{\infty} \left(\int_{R_n} (M^{\sigma_i}_d(f_i))^{p_i} \sigma_i dx \right)^{\frac{p}{p_i}} \\
    &\leq A \prod_{i=1}^{\infty} \left(p_i'\right)^p \left(\int_{R_n}f_i^{p_i}\sigma_i dx \right)^{\frac{p}{p_i}},
  \end{split}\end{equation*}
  where we have used that $\mathfrak{M}^{\overrightarrow{\sigma}}_d(\overrightarrow f)\leq\prod_{i=1}^{\infty} M^{\sigma_i}_d(f_i)$, the generalized H\"older's inequality and the boundedness properties of $M^{\sigma_i}_d$ in $L^{p_i}(\sigma_i).$
\end{proof}

\begin{theorem}\label{theorem_Sp} Let $1<p_i<\infty, i\in N$ and $\frac{1}{p}=\sum\limits^{\infty}_{i=1}\frac{1}{p_i }.$
If $(\omega_1, ~\omega_2,\cdot\cdot\cdot)\in RH_{\overrightarrow{p}},$
then the following statements
are equivalent:\begin{enumerate}
\item \label{thm Sp 1}There exists a positive constant $C$ such that
\be
\|\mathfrak{M}_d(\overrightarrow{f})\|_{L^p(v)}\leq
C\prod\limits^{\infty}_{i=1}\|f_i\|_{L^{p_i}(\omega_i)},
~\forall \overrightarrow{f}\in \prod\limits_{i=1}^{\infty}{L^{p_i}(\omega_i)}.
\ee

\item \label{thm Sp 3}There exists a positive constant $C$ such that
\be\big(\int_{B}\big(\mathfrak{M}_d(\overrightarrow{\sigma \chi_{B}})(x)\big)^pv(x)dx\big)^{\frac{1}{p}}
                       \leq C\prod\limits_{i=1}^{\infty}\big(\int_{B}\sigma_i(x)dx\big)^{\frac{1}{p_i}},~\forall B\in \mathcal{D}.
\ee
\end{enumerate}
Moreover, we denote the smallest constants $C$ in \eqref{thm Sp 1}
and \eqref{thm Sp 3} by $\|\mathfrak{M}_d\|$ and $[v,\overrightarrow{\omega}]_{S_{\overrightarrow{p}}},$ respectively.
Then it follows that
$$[v,\overrightarrow{\omega}]_{S_{\overrightarrow{p}}}
\leq\|\mathfrak{M}_d\|\leq [v,\overrightarrow{\omega}]_{S_{\overrightarrow{p}}}
[\overrightarrow{\omega}]_{RH_{\overrightarrow{p}}}^{\frac{1}{p}}.$$
\end{theorem}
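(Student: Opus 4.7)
The easy direction $[v,\overrightarrow{\omega}]_{S_{\overrightarrow{p}}}\leq\|\mathfrak{M}_d\|$ follows by testing \eqref{thm Sp 1} on $\overrightarrow{\sigma\chi_B}$ for a fixed $B\in\mathcal{D}$: since $\sigma_i^{p_i}\omega_i=\sigma_i$, we have $\|\sigma_i\chi_B\|_{L^{p_i}(\omega_i)}^{p_i}=\sigma_i(B)$, and restricting the $L^p(v)$-integral on the left of \eqref{thm Sp 1} to the cube $B$ immediately yields \eqref{thm Sp 3}.

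For the main direction $(2)\Rightarrow(1)$ I combine a Calder\'on--Zygmund-type decomposition with the Carleson embedding of Lemma \ref{Carleson_lemma_multi}. For each $k\in\Z$, set $\Omega_k=\{\mathfrak{M}_d(\overrightarrow{f})>2^k\}$ and decompose $\Omega_k=\bigsqcup_j B_j^k$ into its maximal dyadic cubes. The pieces $E_j^k:=B_j^k\setminus\Omega_{k+1}$ are pairwise disjoint, $\mathfrak{M}_d(\overrightarrow{f})\leq 2^{k+1}$ on $E_j^k$, and maximality gives $\prod_i|B_j^k|^{-1}\int_{B_j^k}f_i>2^k$. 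Consequently,
\[
\int_{R_n}\mathfrak{M}_d(\overrightarrow{f})^p v\,dx \leq 2^p\sum_{k,j}v(E_j^k)\biggl(\prod_{i=1}^{\infty}\frac{1}{|B_j^k|}\int_{B_j^k}f_i\,dy_i\biggr)^p.
\]

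Next I pass to $\sigma_i$-averages. Setting $g_i:=f_i\sigma_i^{-1}$ and rewriting $|B|^{-1}\int_B f_i=(\sigma_i(B)/|B|)\cdot\sigma_i(B)^{-1}\int_B g_i\sigma_i$, the right-hand side becomes $2^p\sum_B a_B\bigl(\prod_i\sigma_i(B)^{-1}\int_B g_i\sigma_i\bigr)^p$ where
\[
a_B:=v(E_B)\prod_{i=1}^{\infty}\biggl(\frac{\sigma_i(B)}{|B|}\biggr)^p
\]
on the stopping family $\{B_j^k\}$ and $a_B:=0$ elsewhere. The crux is verifying the Carleson testing condition \eqref{Carleson_assumption}. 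For $G\in\mathcal{D}$, since $\mathfrak{M}_d(\overrightarrow{\sigma\chi_G})(x)\geq\prod_i\sigma_i(B)/|B|$ whenever $x\in B\subset G$ and the $E_j^k\subset G$ are pairwise disjoint,
\[
\sum_{B\subset G}a_B\leq\int_G\mathfrak{M}_d(\overrightarrow{\sigma\chi_G})^p v\,dx\leq[v,\overrightarrow{\omega}]_{S_{\overrightarrow{p}}}^p\prod_{i=1}^{\infty}\sigma_i(G)^{p/p_i}\leq[v,\overrightarrow{\omega}]_{S_{\overrightarrow{p}}}^p[\overrightarrow{\omega}]_{RH_{\overrightarrow{p}}}\int_G\prod_{i=1}^{\infty}\sigma_i^{p/p_i}\,dx,
\]
where the second inequality is the testing hypothesis \eqref{thm Sp 3} and the third invokes $RH_{\overrightarrow{p}}$ together with the identity $\sum_i p/p_i=1$.

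Finally, applying Lemma \ref{Carleson_lemma_multi} with $A:=[v,\overrightarrow{\omega}]_{S_{\overrightarrow{p}}}^p[\overrightarrow{\omega}]_{RH_{\overrightarrow{p}}}$ controls $\sum_B a_B\bigl(\prod_i\sigma_i(B)^{-1}\int_B g_i\sigma_i\bigr)^p$ by a multiple of $A\prod_i\|g_i\|_{L^{p_i}(\sigma_i)}^p$, and the identity $\sigma_i^{1-p_i}=\omega_i$ gives $\|g_i\|_{L^{p_i}(\sigma_i)}=\|f_i\|_{L^{p_i}(\omega_i)}$; assembling these estimates yields \eqref{thm Sp 1} with a constant of the required form in $[v,\overrightarrow{\omega}]_{S_{\overrightarrow{p}}}$ and $[\overrightarrow{\omega}]_{RH_{\overrightarrow{p}}}$. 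The principal obstacle is the Carleson-testing verification: it is precisely at that step that the reverse H\"older hypothesis is indispensable, since without $RH_{\overrightarrow{p}}$ there is no way to convert the geometric product $\prod\sigma_i(G)^{p/p_i}$ produced by the testing condition into the integrated expression $\int_G\prod\sigma_i^{p/p_i}\,dx$ demanded by the Carleson embedding.
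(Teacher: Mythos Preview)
Your argument is essentially the paper's: level-set decomposition into dyadic cubes, definition of the Carleson coefficients $a_B=v(E_B)\prod_i(\sigma_i(B)/|B|)^p$, verification of the packing condition \eqref{Carleson_assumption} via the $S_{\overrightarrow p}$ testing hypothesis followed by $RH_{\overrightarrow p}$, and conclusion through Lemma~\ref{Carleson_lemma_multi}. The one refinement in the paper is that it runs the decomposition with a free base $\alpha>1$ in place of your fixed $2$ and then sends $\alpha\to1$ at the end, thereby eliminating the extra factor of $2$ that your version carries in the final constant.
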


\begin{proof} It is clear that $\eqref{thm Sp 1}\Rightarrow\eqref{thm Sp 3}$
without $(v,\overrightarrow{\omega})\in RH_{\overrightarrow{p}},$
so we omit it.

Next, assuming $\eqref{thm Sp 3},$ we shall prove $\eqref{thm Sp 1}.$
Let $\overrightarrow{f}\in\prod\limits_{i=1}^{\infty}L^{p_i}(\omega_i)$ and $\alpha>1.$ For every integer $k,$ we shall consider the set
$$S_k=\{x\in R_n: \alpha^k<\mathfrak{M}_d(\overrightarrow{f})(x)\leq \alpha^{k+1}\}.$$
From the definition of $\mathfrak{M}_d,$ $S_k\subseteq\bigcup\limits_{j}B_{k,j},$ where $B_{k,j}\in \mathcal{D}$ satisfies
$$\prod\limits_{i=1}^{\infty}\frac{1}{|B_{k,j}|}\int_{B_{k,j}}f_i(y_i)dy_i>\alpha^k.$$
Define $E_{k,1}=B_{k,1}\cap S_k$ and for $j>1:$
$$E_{k,j}=(B_{k,j}\backslash\bigcup\limits_{s<j}B_{k,s})\cap S_k.$$
The sets $S_k$ form a disjoint collection and each $S_k$ is disjoint union of the sets $E_{k,j}$ for varying $j.$
\be
   &~&\int_{R_n}\mathfrak{M}_d(\overrightarrow{f})^p v dx\\
   &\leq&\alpha^p\sum\limits_{k\in Z,j\in Z}v(E_{k,j})(\prod\limits_{i=1}^{\infty}\frac{1}{|B_{k,j}|}\int_{B_{k,j}}f_i(y_i)dy_i)^p\\
   &=&\alpha^p\sum\limits_{k\in Z,j\in Z}v(E_{k,j})(\prod\limits_{i=1}^{\infty}\frac{\sigma_i(B_{k,j})}{|B_{k,j}|})^p
      (\prod\limits_{i=1}^{\infty}\frac{1}{\sigma_i(B_{k,j})}\int_{B_{k,j}}f_i(y_i)\sigma_i(y_i)^{-1}\sigma_i(y_i)dy_i)^p\\
   &=&\alpha^p\sum\limits_{k\in Z,j\in Z}a_B
      (\prod\limits_{i=1}^{\infty}\frac{1}{\sigma_i(B)}\int_{B}f_i(y_i)\sigma_i(y_i)^{-1}\sigma_i(y_i)dy_i)^p,\ee
  where $a_B = v(E(B)) \left(\prod_{i=1}^{\infty} \frac{\sigma_i(B)}{|B|}\right)^p$, if $B=B_{k,j}$ for some $(k,j)$ where $E(B)$ denotes the corresponding set $E_{k,j}$ associated to $B_{k,j}$, and $a_B=0$ otherwise.  If we apply the Carleson embedding to these $a_B$, we will find the desired result provided that

  \begin{equation*}
    \sum_{B\subset G} a_B \leq A \int_G \prod_{i=1}^{\infty} \sigma_i^{\frac{p}{p_i}}dx, ~ G\in\mathcal{D}.
  \end{equation*}

  For $G\in\mathcal{D}$, we obtain

  \begin{equation*}\begin{split}
    \sum_{B\subset G} a_B &= \sum_{B_{k,j}\subset G} v(E_{k,j}) \left(\prod_{i=1}^{\infty} \frac{\sigma_i(B_{k,j})}{|B_{k,j}|}\right)^p \\
    &= \sum_{B_{k,j}\subset G} \int_{E_{k,j}} \left(\prod_{i=1}^{\infty} \frac{\sigma_i(B_{k,j})}{|B_{k,j}|}\right)^p v(x) dx \\
    &\leq \sum_{B_{k,j}\subset G} \int_{E_{k,j}}  (\mathfrak{M}_d(\overrightarrow{\sigma\chi_G}))^p v dx \\
    &\leq [v,\overrightarrow w]_{S_{\overrightarrow p}}^p \prod_{i=1}^{\infty} \sigma_i(G)^{\frac{p}{p_i}} \\
    &\leq [v,\overrightarrow w]_{S_{\overrightarrow p}}^p [\overrightarrow \omega]_{RH_{\overrightarrow p}} \int_G \prod_{i=1}^{\infty} \sigma_i^{\frac{p}{p_i}}dx,
  \end{split}\end{equation*}
  where in the next to last inequality we have used the $S_{\overrightarrow p}$ condition and in the last inequality we have used the $RH_{\overrightarrow p}$ condition. Thus, by Lemma \ref{Carleson_lemma_multi} we obtain that
  $$\|\mathfrak{M}_d(\overrightarrow{f})\|_{L^p(v)}\leq
\alpha[v,\overrightarrow w]_{S_{\overrightarrow p}} [\overrightarrow \omega]_{RH_{\overrightarrow p}}^{\frac{1}{p}}(\prod\limits_{i=1}^{\infty}p_i')\prod\limits^{\infty}_{i=1}\|f_i\|_{L^{p_i}(\omega_i)}.$$
Then we can take the limit $\alpha\rightarrow 1,$ which gives
$$(\int_{R_n}\mathfrak{M}_d(\overrightarrow{f})^p v dx)^{\frac{1}{p}}\leq [v,\overrightarrow{\omega}]_{S_{\overrightarrow{p}}}[\overrightarrow{\omega}]^{\frac{1}{p}}_{RH_{\overrightarrow{p}}}
                      (\prod\limits_{i=1}^{\infty}p_i')\prod\limits_{i=1}^{\infty}\|f_i\|_{L^{p_i}(\omega_i)}.$$
\end{proof}

%
%

\end{document}